\date{}
\newtheorem{statement}{}[section]
\newtheorem{theoreme}[statement]{Theorem}
\newcommand\D{\mathbb D}
\newcommand\e{{\rm e}}
\newcommand\eps{\varepsilon}
\newcommand\ind{{\rm 1\kern-.30em I}}
\let\phi=\varphi
\newcommand{\capa}{{\rm Cap}\,}
\newcommand{\capaN}{{\rm Cap}_N \,}
\title{\bf Entropy numbers and spectral radius type formula  for composition operators on the polydisk}
\author{\it Daniel Li,  Herv\'e Queff\'elec, Luis Rodr{\'\i}guez-Piazza }
\date{\footnotesize \today}
\begin{document}

\maketitle

\noindent {\bf Abstract.} We give estimates of the entropy numbers of composition operators on the Hardy space of the disk and of the polydisk. 
\medskip

\noindent {\bf MSC 2010} Primary 47B33 Secondary 47B06
\medskip

\noindent {\bf Key-words} composition operator ; entropy numbers ;  polydisk ; pluricapacity

\section{Introduction}

This short paper was motivated by a question of J.~Wengenroth  (\cite{WEN}) about entropy numbers of composition operators on Hardy spaces $H^2$, which 
stand a little apart in the jungle of ``$s$-numbers'', even though they seem the most natural for the study of compactness, since their membership in $c_0$ 
characterizes compactness, even in the general framework  of arbitrary Banach spaces. Indeed, in various papers (see 
\cite{BLQR, LIQUEROD, LIQURO, LQR, DHL}), 
we studied in detail the approximation numbers of composition operators, and here we will essentially transfer those results to entropy numbers thanks to the polar 
(Schmidt) decomposition and a general result on entropy numbers of diagonal operators on $\ell^2$.

So, the proofs are easy, but the statements feature a very different behavior of those entropy numbers. In particular, we will investigate a few properties related 
with a so-called ``spectral radius type formula'' which we  obtained, in dimension one through a result of Widom (\cite{LQR}), and, partially in 
dimension $N$ (\cite{DHL, LQR-pluricap}), through a result of Nivoche (\cite{NIV}) and Zakharyuta (\cite{ZAK}).
\bigskip

\noindent{\bf Acknowledgement.} L. Rodr{\'\i}guez-Piazza is partially supported by the project MTM2015-63699-P (Spanish MINECO and FEDER funds).
\goodbreak
\section{Entropy numbers}

We begin by recalling some facts on $s$-numbers.   

Given an operator $T \colon X \to Y$ between Banach spaces, recall (\cite{CAST}) that we can attach to this operator five non-increasing sequences 
$(a_n)$, $(b_n)$,  $(c_n)$, $(d_n)$, $(e_n)$ of non-negative numbers (depending on $T$), respectively the sequences of  \emph{approximation}, 
\emph{Bernstein}, \emph{Gelfand}, \emph{Kolmogorov}, and \emph{entropy} numbers of $T$. The latter are defined (\cite[Chapter 1]{CAST}, or 
\cite[Chapter 5]{PIS}), for $n\geq 1$ by:
\begin{equation} \label{entr} 
e_{n} (T) = \inf \{\varepsilon > 0 \, ; \ N \big( T (B_X), \varepsilon B_Y \big) \leq 2^{n - 1}\} \, , 
\end{equation}
where $B_X$ and $B_Y$ are the respective closed unit balls of $X$ and $Y$, and where, for $A, B \subseteq Y$, $N(A, B)$ denotes the smallest number of 
translates of $B$ needed to cover $A$.

All those sequences $(a_n)$, $(b_n)$,  $(c_n)$, $(d_n)$, $(e_n)$, say $(u_n)$, share the ideal property:
\begin{displaymath} 
u_{n} (A T B) \leq \Vert A \Vert \, u_{n}(T) \, \Vert B \Vert \, .
\end{displaymath} 

For Hilbert spaces, it turns out that $a_n = b_n = c_n = d_n = s_n$, where $(s_n)$  designates the sequence of singular numbers, but entropy numbers stay a little 
apart. 

For general Banach spaces $X$ and $Y$ and $T \colon X \to Y$, we have, in general (\cite[Theorem~1]{Carl}, see also \cite[Theorem~5.2]{PIS}), for 
$\alpha>0$:
\begin{displaymath} 
\sup_{1\leq k \leq n} k^\alpha e_k (T) \leq C_\alpha \sup_{1 \leq k \leq n} k^\alpha a_k (T) \, ,
\end{displaymath} 
and, if $X$ and $Y^\ast$ are of type $2$:
\begin{displaymath} 
\qquad \qquad a_n (T) \leq K \, e_n (T) \, , \qquad \text{for all } n \geq 1 
\end{displaymath} 
(\cite[Corollary~1.6]{GKS}), where $K = \kappa \, [ T_2 (X) T_2 (Y^\ast)]^2$; in particular, if $T$ acts between Hilbert spaces (see \cite[Theorem~5.3]{PIS}):
\begin{displaymath} 
\qquad \qquad a_n (T) \leq 4 \, e_n (T) \, , \qquad \text{for all } n \geq 1 \, .
\end{displaymath} 

Those inequalities indicate that entropy numbers are always bigger than singular numbers, up to a constant, and that, as far as the scale of powers $n^\alpha$ is 
implied, they are dominated by approximation numbers in a weak sense. But it turns out that, individually, they can be much bigger than the latter for composition 
operators, as we shall see. 

We will rely on the following  estimate  (\cite[p.~17]{CAST}), in which $\ell^2$ denotes the space of square-summable sequences $x = (x_k)_{k\geq 1}$ of 
complex numbers. This estimate is given  for the sequence $(\eps_{n})$ of covering numbers and with the scale of powers of $2$, but 
$e_n = \eps_{2^{n - 1}}$, by definition, and the change of $2$ to $\e$ only affects constants.
\begin{theoreme} {\rm (see \cite[p. 17]{CAST})} \label{irmt} 
There exist absolute constants $0 < a < b$ such that, for any diagonal compact operator $\Delta \colon \ell^2\to \ell^2$ with positive and non-increasing 
eigenvalues $(\sigma_k)_{k\geq 1}$, namely $\Delta \big( (x_k)_k \big) = (\sigma_k x_k)_k$, we have, for all $n \geq 1$:
\begin{equation} 
a \, \sup_{k \geq 1} \bigg[ \e^{- n /k} \bigg( \prod_{j = 1}^k \sigma_j \bigg)^{1/k} \bigg] 
\leq e_{n} (\Delta) \leq b \,  \sup_{k \geq 1} \bigg[ \e^{- n /k} \bigg( \prod_{j = 1}^k \sigma_j \bigg)^{1/k} \bigg] \, .
\end{equation} 
\end{theoreme}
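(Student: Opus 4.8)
The plan is to establish the two inequalities independently, each reduced to a volumetric estimate for the covering numbers of the ellipsoid $\Delta(B_{\ell^2})$. Since $e_n=\eps_{2^{n-1}}$, I would work throughout with the covering numbers $N\big(\Delta(B_{\ell^2}),\eps B_{\ell^2}\big)$ and only at the very end convert the dyadic scale of powers of $2$ into the base $\e$ appearing in the statement, absorbing that change (and the real-versus-complex normalisation of $\ell^2$) into the absolute constants $a<b$. Write $g(k)=\big(\prod_{j=1}^k\sigma_j\big)^{1/k}$ for the running geometric mean; because $(\sigma_j)$ is non-increasing one has $\sigma_{k+1}\le\sigma_k\le g(k)$, and hence $g$ is itself non-increasing, a fact I would use repeatedly.

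For the lower bound I would, for each fixed $k$, compose $\Delta$ with the orthogonal projection $P_k$ onto the first $k$ coordinates. Since $\|P_k\|\le 1$, the ideal property gives $e_n(P_k\Delta)\le e_n(\Delta)$, and $P_k\Delta(B_{\ell^2})$ is exactly the ellipsoid $E_k\subset\C^k$ with semi-axes $\sigma_1\ge\cdots\ge\sigma_k$. Comparing $2k$-dimensional volumes, any covering of $E_k$ by $N$ balls of radius $\eps$ forces $N\ge\prod_{j=1}^k\sigma_j^2/\eps^{2k}=(g(k)/\eps)^{2k}$; imposing $N\le 2^{n-1}$ yields $\eps\ge g(k)\,2^{-(n-1)/(2k)}$, whence $e_n(\Delta)\ge g(k)\,2^{-(n-1)/(2k)}$. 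Taking the supremum over $k$ and using $2^{-(n-1)/(2k)}\ge\e^{-n/k}$ produces the left-hand inequality. This direction is essentially free once the volume comparison is in place.

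For the upper bound, which is the substantive half, I would fix a cut $k$ and split $\Delta=\Delta_k+R_k$, where $\Delta_k$ is the compression to the first $k$ coordinates and $R_k$ the tail, of norm $\|R_k\|=\sigma_{k+1}\le g(k)$. Sub-additivity of entropy numbers, $e_n(A+B)\le e_n(A)+e_1(B)$ with $e_1(R_k)=\|R_k\|$, gives $e_n(\Delta)\le e_n(\Delta_k)+\sigma_{k+1}$. For the finite-rank head I would cover $E_k$ by a volumetric net, $N(E_k,\eps B)\le\prod_{j=1}^k(1+2\sigma_j/\eps)$, which in the range $\eps\lesssim g(k)$ is at most $(C g(k)/\eps)^{2k}$; choosing $\eps$ so that this equals $2^{n-1}$ gives $e_n(\Delta_k)\lesssim g(k)\,2^{-(n-1)/(2k)}$. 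Finally I would optimise the cut $k$ (taking it near the index maximising the right-hand side), so that the tail contribution $\sigma_{k+1}$ is of the same order as the head contribution; converting base $2$ to base $\e$ then yields $e_n(\Delta)\le b\sup_k\e^{-n/k}g(k)$.

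The hard part is concentrated entirely in the upper bound, and it is as much careful book-keeping as a single sharp inequality: one must keep the volumetric net for the ellipsoid efficient \emph{while} absorbing the whole infinite tail into one $\eps$-ball, and then check that the single supremum $\sup_k \e^{-n/k}g(k)$ really does capture, as $k$ varies, the optimal balance between the head (volume) cost and the tail (operator-norm) cost. Reconciling the dyadic covering scale $2^{n-1}$ with the base-$\e$ weight, and carrying through the real/complex dimension count, is what makes the two-sided statement hold up to the absolute constants $a<b$ rather than with explicit ones; this, rather than any individual step, is where the care lies.
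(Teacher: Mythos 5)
The paper offers no proof of this theorem: it is quoted from \cite[p.~17]{CAST}, together with the one-line remark that passing from the dyadic scale of covering numbers to the base $\e$ ``only affects constants''. Your volumetric plan is indeed the standard argument behind the citation, and your lower bound is complete and correct: projecting onto the first $k$ coordinates, comparing $2k$-dimensional volumes, and using the termwise inequality $2^{-(n-1)/(2k)}\geq \e^{-n/k}$ is exactly right (that inequality holds because $(n-1)\ln 2/2\leq n$).

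The upper bound, however, has a genuine gap --- two, in fact. First, the net estimate $\prod_{j\leq k}(1+2\sigma_j/\eps)\leq (C g(k)/\eps)^{2k}$ is false in the range $\eps\lesssim g(k)$: the passage from the product to the geometric mean requires $\eps\leq\sigma_k$, i.e.\ every retained semi-axis at least $\eps$, and a non-increasing sequence can have $\sigma_k\ll g(k)$. Concretely, for $\sigma_1=1$, $\sigma_2=\e^{-M}$ with $M\gg n$, your per-$k$ head bound $e_n(\Delta_2)\lesssim g(2)\,2^{-(n-1)/4}=\e^{-M/2}2^{-(n-1)/4}$ fails, since the unit disc in the first complex coordinate already forces $e_n(\Delta_2)\geq 2^{-(n-1)/2}$. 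Cutting at the maximizer of $\e^{-n/k}g(k)$ does not ensure $\eps\leq\sigma_k$; the correct bookkeeping (as in \cite{CAST}) couples the cut to the radius, taking $m=\#\{j:\sigma_j\geq\eps\}$ so that each factor is at most $3\sigma_j/\eps$, and only then invokes the definition of the supremum. Second, your closing step ``converting base $2$ to base $\e$'' goes the wrong way: termwise $2^{-(n-1)/(2k)}\geq\e^{-n/k}$ --- which is precisely why the conversion worked in your lower bound --- so $\sup_k 2^{-(n-1)/(2k)}g(k)\geq\sup_k\e^{-n/k}g(k)$, and no absolute constant reverses this (for $\sigma_j=\delta^{j-1}$ both suprema have the form $\exp(-c\sqrt{n\ln(1/\delta)})$ with different $c$, so the ratio is unbounded). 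In fact the upper inequality exactly as printed cannot be proved because it is false: for given $n$ take $\sigma_1=1$ and $\sigma_j=\e^{-2n(j-1)}$; then $\sup_k\e^{-n/k}g(k)=\e^{-n}$, while $e_n(\Delta)\geq 2^{-(n-1)/2}$, and $2^{-(n-1)/2}/\e^{-n}\to\infty$, ruling out any absolute $b$. What your method proves once the net is repaired --- and what \cite{CAST} actually states --- is the dyadic form $e_n(\Delta)\asymp\sup_k 2^{-(n-1)/(2k)}g(k)$; the paper's assertion that the change from $2$ to $\e$ is harmless is itself valid only for the lower bound (this does not affect the paper's later power-scale estimates with unspecified constants in the exponent, but it does matter wherever exact constants are claimed, as in Theorem~\ref{betise}).
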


A useful corollary of Theorem~\ref{irmt} is the following.
\begin{theoreme}\label{betis} 
Let $T \colon H_1 \to H_2$ be a compact operator between the Hilbert spaces $H_1$ and $H_2$, and let $(a_n)_{n \geq 1}$ be its sequence of approximation 
numbers. Then, for all $n \geq 1$:
\begin{equation} 
\alpha \, \sup_{k \geq 1} \bigg[ \e^{- n/ k} \bigg (\prod_{j = 1}^k a_j \bigg)^{1/k} \bigg] 
\leq e_{n} (T) \leq \beta \, \sup_{k\geq 1} \bigg[ \e^{- n /k} \bigg( \prod_{j = 1}^k a_j \bigg)^{1/k} \bigg] \, ,
\end{equation} 
where $\alpha$ and $\beta$ are positive numerical constants.
\end{theoreme}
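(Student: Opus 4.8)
The plan is to deduce Theorem~\ref{betis} from Theorem~\ref{irmt} via the Schmidt (polar) decomposition of the compact operator $T$, which reduces the problem on general Hilbert spaces to the model case of a diagonal operator on $\ell^2$. Since $T \colon H_1 \to H_2$ is compact, it admits a singular value decomposition $T = \sum_{k \geq 1} a_k \langle \cdot, u_k \rangle v_k$, where $(u_k)$ and $(v_k)$ are orthonormal systems in $H_1$ and $H_2$ respectively, and $(a_k)_{k \geq 1}$ is precisely the non-increasing sequence of approximation (= singular) numbers of $T$, using the fact recalled in the excerpt that $a_n = s_n$ for operators between Hilbert spaces. I would factor $T$ accordingly: writing $U \colon \ell^2 \to H_1$ and $V \colon \ell^2 \to H_2$ for the isometric embeddings sending the canonical basis $(e_k)$ to $(u_k)$ and $(v_k)$, and $\Delta \colon \ell^2 \to \ell^2$ for the diagonal operator $\Delta\big((x_k)_k\big) = (a_k x_k)_k$, one has $T = V \Delta U^\ast$.

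The next step is to transfer the entropy number estimate through this factorization using the ideal property $e_n(ATB) \leq \|A\|\, e_n(T)\, \|B\|$. From $T = V \Delta U^\ast$ and the fact that $\|U^\ast\| = \|V\| = 1$ (both $U$ and $V$ are isometries, hence contractions, as is $U^\ast$), the ideal property immediately yields $e_n(T) \leq e_n(\Delta)$. For the reverse direction I would recover $\Delta$ from $T$: since $U$ and $V$ are isometries onto closed subspaces, they admit contractive left inverses (the adjoints $U^\ast$ and $V^\ast$ restricted appropriately, or the orthogonal projections followed by the inverse isometries), giving $\Delta = V^\ast T U$ with $\|V^\ast\| \leq 1$ and $\|U\| = 1$, so that $e_n(\Delta) \leq e_n(T)$ as well. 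Combining the two inequalities gives the exact identity $e_n(T) = e_n(\Delta)$ for every $n \geq 1$.

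Finally I would apply Theorem~\ref{irmt} to the diagonal operator $\Delta$, whose eigenvalues $(\sigma_k)$ are exactly the $(a_k)$, which are positive and non-increasing as required (if only finitely many $a_k$ are nonzero, the suprema are taken over the relevant finite range and the argument is unaffected). The theorem gives
\begin{displaymath}
a \, \sup_{k \geq 1} \bigg[ \e^{-n/k} \bigg( \prod_{j=1}^k a_j \bigg)^{1/k} \bigg] \leq e_n(\Delta) \leq b \, \sup_{k \geq 1} \bigg[ \e^{-n/k} \bigg( \prod_{j=1}^k a_j \bigg)^{1/k} \bigg] \, ,
\end{displaymath}
and substituting $e_n(\Delta) = e_n(T)$ yields the claim with $\alpha = a$ and $\beta = b$, the absolute constants of Theorem~\ref{irmt}.

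The proof is essentially formal once the reduction is set up; the only point requiring genuine care is the verification that the factorization is norm-preserving in both directions, i.e.\ that the isometric embeddings and their adjoints are all contractions, so that the ideal property transfers the estimate without loss and produces the clean identity $e_n(T) = e_n(\Delta)$. This is the step where one must be attentive that the approximation numbers of $T$ coincide with the diagonal entries of $\Delta$, which is exactly the Hilbert-space identity $a_n = s_n$ invoked above.
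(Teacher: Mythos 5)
Your proposal is correct and is essentially identical to the paper's proof: the authors also use the Schmidt decomposition to factor $T$ through the diagonal operator $\Delta$ with entries $s_n = a_n$ in both directions ($T = V_1 \Delta U_1$ and $\Delta = V_2 T U_2$, with all four factors of norm at most $1$), then invoke the ideal property and Theorem~\ref{irmt}. Your extra remarks (the clean identity $e_n(T) = e_n(\Delta)$ and the finite-rank caveat) are harmless refinements of the same argument.
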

\begin{proof} 
Let $T x = \sum_{n = 1}^\infty s_n (x \mid u_n) v_n$ the Schmidt decomposition of $T$, where $(u_n)_n$ and $(v_n)_n$ are orthonormal sequences of $H_1$ 
and $H_2$, respectively, and $(s_n)_n$ is the sequence of singular numbers of $T$. Let $\Delta \colon \ell_2 \to \ell_2$ the diagonal operator with diagonal values 
$s_n$, $n \geq 1$. Then $T = V_1 \Delta U_1$ and $\Delta = V_2 T U_2$, with $U_1 x = \big( (x \mid u_n)\big)_n$, 
$V_1 \big( (t_n)_n \big) = \sum_n t_n v_n$, $U_2 \big( (t_n)_n \big) = \sum_n t_n u_n$ and $V_2 x = \big( (x \mid v_n) \big)_n$. We have 
$\| U_1\|, \| V_1\|, \| U_2 \|, \| V_2\| \leq 1$; hence the result follows from Theorem~\ref{irmt} and the ideal property.
\end{proof}

This theorem might be thought useless, because we don't know better the $a_n$'s than the $e_n$'s! In our situation, this is not the case, since we made a more or 
less systematic study of the $a_n$'s for composition operators in \cite{BLQR, LIQURO, LIQUEROD, LQR} for example. 

We now pass to applications to composition operators $C_{\varphi}$, defined as $C_{\varphi} (f) = f \circ \varphi$ when they act on the Hardy space 
$H^{2} (\D^N)$ (which is always the case if $N = 1$). Here, $\varphi$ denotes an analytic and non-degenerate self-map of $\D^N$.  For clarity, we separate 
the cases of dimension $N = 1$ and of dimension $N \geq 2$. 
\goodbreak

\section{Applications in dimension $1$}

\subsection{General results}

In \cite{LQR}, we had coined the parameter:
\begin{equation} 
\beta_{1} (T) = \lim_{n \to \infty} \big[ a_{n}(T) \big]^{1/n} 
\end{equation} 
and its versions  $\beta_{1}^{+} (T), \beta_{1}^{-} (T)$ with a upper limit and a lower limit respectively. The following result (\cite{LQR}) shows in 
particular that no lower or upper limit is needed for $\beta = \beta_1$, and provides a simpler proof of the second item in Theorem~\ref{jfa} than in our initial 
proof of \cite{LIQUEROD}. 

For the definition of the Green capacity $\capa (A)$ of a Borel subset $A$ of $\D$, $0 \leq \capa (A) \leq \infty$, we refer to \cite{LQR}.

\begin{theoreme}\label{jfa} 
Let $\Omega = \varphi (\D)$, with $\phi \colon \D \to \D$ a non-constant analytic map. Then:\par\smallskip

$1)$ One always has $\beta_{1}^{-} (C_\varphi) = \beta_{1}^{+} (C_\varphi) =: \beta_{1} (C_\varphi)$ and:
\begin{equation} 
\beta_{1} (C_\varphi) = \exp [- 1/\capa (\Omega) ] > 0 \, .
\end{equation} 

$2)$ In particular, one has the equivalence:
\begin{equation} 
\beta_1 (C_\varphi) = 1 \quad \Longleftrightarrow \quad \Vert \varphi \Vert_\infty = 1 \, .
\end{equation} 
\end{theoreme}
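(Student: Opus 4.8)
The plan is to pass through the singular numbers. Since $H^2(\D)$ is a Hilbert space, the approximation numbers of $C_\varphi$ coincide with its singular numbers, so $a_n(C_\varphi) = \sqrt{\lambda_n}$, where $\lambda_n$ is the $n$-th eigenvalue of the positive compact operator $C_\varphi^\ast C_\varphi$. Because $\beta_1$, $\beta_1^+$, $\beta_1^-$ only detect the \emph{exponential} rate of decay, it suffices to control $(\lambda_n)$ up to subexponential factors: if $\lim_n \lambda_n^{1/n}$ genuinely exists, then $a_n^{1/n} = (\lambda_n^{1/n})^{1/2}$ converges, so automatically $\beta_1^-(C_\varphi) = \beta_1^+(C_\varphi)$ and $\beta_1(C_\varphi)^2 = \lim_n \lambda_n^{1/n}$. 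Thus the whole first item reduces to showing this last limit exists and equals $\exp[-2/\capa(\Omega)]$.

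First I would realize $C_\varphi^\ast C_\varphi$ as a concrete kernel operator. Using the Szegő kernel $K_w(z) = (1-\bar w z)^{-1}$, the identity $C_\varphi^\ast K_w = K_{\varphi(w)}$, and the change-of-variables identity $\langle C_\varphi^\ast C_\varphi f, f\rangle = \int_{\overline\D} |f|^2 \, d\mu$, one identifies $C_\varphi^\ast C_\varphi$ with the Toeplitz-type operator $T_\mu$ on $H^2$ attached to the pullback measure $\mu = (\varphi^\ast)_\ast m$ of normalized arclength $m$ on $\T$; its eigenvalues are exactly the $\lambda_n$. The measure $\mu$ is carried by $\overline\Omega = \overline{\varphi(\D)}$, and here the analytic heart is a theorem of Widom (the input behind \cite{LQR}) giving the precise $n$-th root asymptotics of the eigenvalues of such reproducing-kernel operators in terms of the Green capacity of the carrier: it yields $\lim_n \lambda_n^{1/n} = \exp[-2/\capa(\Omega)]$, and — this is the decisive gain over soft comparison arguments — it produces a \emph{genuine} limit rather than only two-sided $\liminf/\limsup$ bounds. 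One must also note that, for Green capacity, $\capa(\Omega) = \capa(\overline\Omega) = \capa(\partial\Omega)$, so that the boundary-supported $\mu$ feels exactly $\capa(\Omega)$. Combining this with the reduction above gives simultaneously $\beta_1^-(C_\varphi) = \beta_1^+(C_\varphi)$ and $\beta_1(C_\varphi) = \exp[-1/\capa(\Omega)]$, which is positive precisely when $\capa(\Omega) < \infty$.

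For the second item I would argue purely capacitively, which is what makes it simpler than the direct estimates of \cite{LIQUEROD}. By item $1)$, $\beta_1(C_\varphi) = 1$ is equivalent to $\capa(\Omega) = \infty$. Now $\Omega = \varphi(\D)$ is a connected open subset of $\D$, and Green capacity is governed by the Green function $g_\D$, which vanishes on $\partial\D$. If $\|\varphi\|_\infty < 1$, then $\overline\Omega$ is a compact subset of $\D$ on which $g_\D$ is bounded below, forcing finite Green energy and hence $\capa(\Omega) < \infty$, so $\beta_1 < 1$. Conversely, if $\|\varphi\|_\infty = 1$, then $\Omega$ contains points arbitrarily close to $\partial\D$; since $g_\D(z,w) \to 0$ as $z,w \to \partial\D$, the equilibrium energy of pieces of $\Omega$ near the boundary tends to $0$, which drives $\capa(\Omega)$ to $\infty$ and gives $\beta_1 = 1$. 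So item $2)$ reduces entirely to the boundary behaviour of $g_\D$.

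The main obstacle is the rigorous application of Widom's theorem in the present generality. His hypotheses are typically phrased for well-behaved compact sets staying strictly inside $\D$, whereas here $\Omega$ may reach $\partial\D$ (exactly the case $\|\varphi\|_\infty = 1$, where both sides are expected to be trivial but must still be justified), and $\varphi$ need not be injective, so $\mu$ carries a multiplicity that must be shown to affect only subexponential factors, the exponential rate depending on $\overline\Omega$ solely through $\capa(\Omega)$. Establishing this stability of the $n$-th root asymptotics under the density of $\mu$, uniformly enough to deliver an honest limit, is the delicate point; by contrast the Hilbert-space reduction of the first paragraph and the capacity computation of the third are comparatively routine.
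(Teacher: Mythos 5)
There is a genuine gap, and it sits exactly at the center of your argument. You reduce the whole theorem to exact $n$-th root asymptotics for the eigenvalues $\lambda_n$ of the Toeplitz-type operator $T_\mu=C_\varphi^*C_\varphi$ attached to the pullback measure $\mu$, and you then invoke ``a theorem of Widom'' asserting $\lim_n\lambda_n^{1/n}=\exp[-2/\capa(\Omega)]$ for such reproducing-kernel operators ``in terms of the Green capacity of the carrier''. No such theorem is available in this generality, and it is not the result behind \cite{LQR}: Widom's theorem \cite{WI} concerns the Kolmogorov widths $d_n$ of the restriction of the unit ball of $H^\infty(\D)$ to a \emph{compact} $K\subset\D$, giving $\lim_n d_n^{1/n}=\e^{-1/\capa(K)}$; it says nothing directly about spectral asymptotics of measure-symbol operators. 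Moreover, the reduction to ``the carrier'' is structurally wrong, not merely unverified: $\mu$ is carried by $\overline\Omega$ (often by $\partial\Omega$), whereas the constant in the theorem is the capacity of the \emph{open} set $\Omega$, computed by exhaustion by compact subsets; your claimed chain $\capa(\Omega)=\capa(\overline\Omega)=\capa(\partial\Omega)$ is unjustified for a general image $\Omega=\varphi(\D)$, since passing to the closure can increase the Green capacity when $\partial\Omega$ is irregular. So a rate depending only on (the capacity of) the support of $\mu$ cannot in general reproduce $\exp[-2/\capa(\Omega)]$; the dependence must go through finer structure of $\mu$ (how $\varphi$ distributes harmonic measure), and this — together with the stability under the density/multiplicity of $\mu$, which you yourself flag as ``the delicate point'' — is not a technical remainder but essentially the entire content of the theorem. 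Flagging it does not close it.

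For comparison: the present paper does not reprove Theorem~\ref{jfa} at all — it quotes it from \cite{LQR} — and in \cite{LQR} the two inequalities come from different mechanisms rather than from one spectral limit theorem. The upper bound $\beta_1^+(C_\varphi)\leq\e^{-1/\capa(\Omega)}$ (nontrivial only when $\Vert\varphi\Vert_\infty<1$, since otherwise $\capa(\Omega)=\infty$) is obtained from Widom's width theorem on compact subsets $\overline{\varphi(r\D)}$ together with the ideal property; the lower bound $\beta_1^-(C_\varphi)\geq\e^{-1/\capa(K)}$ for every compact $K\subset\Omega$ is obtained from test functions built on reproducing kernels at finite subsets of $K$, followed by the exhaustion $\capa(K)\uparrow\capa(\Omega)$. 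Note that even if one granted your spectral asymptotics for compactly carried $\mu$, the case $\Vert\varphi\Vert_\infty=1$ would still require precisely this exhaustion argument, so your route does not avoid the machinery of \cite{LQR}; it presupposes a stronger statement. Two smaller points: the Hilbert-space reduction $a_n=s_n=\sqrt{\lambda_n}$ and the identification $C_\varphi^*C_\varphi=T_\mu$ are correct but routine; and in item $2)$ your phrase ``$\Omega$ contains points arbitrarily close to $\partial\D$'' is insufficient as stated, because singletons are polar (zero capacity) — one must use that $\Omega$, being open and connected, contains continua of definite size reaching toward $\T$, whose Green capacities blow up; with that correction, your capacitive dichotomy is exactly the equivalence $\capa(\Omega)=\infty\Longleftrightarrow\Vert\varphi\Vert_\infty=1$ used in the paper.
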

\smallskip

Here, another parameter emerges.
\begin{equation}\label{crfo} 
\gamma_{1}(T) = \lim_{n\to \infty} \big[e_{n}(T) \big]^{1/\sqrt{n}}
\end{equation} 
and its  $\gamma_{1}^{+}(T), \gamma_{1}^{-}(T)$ versions. 

\begin{theoreme}\label{betise} 
Let   $\varphi \colon \D \to \D$ be a symbol and $\Omega = \phi (\D)$. Then:
\smallskip

$1)$ $\gamma_{1}^{-} (C_\varphi) = \gamma_{1}^{+} (C_\varphi) =: \gamma_{1} (C_\varphi)$ and:
\begin{equation} 
\gamma_{1} (C_\varphi) = \exp \big[- \sqrt{2/ \capa (\Omega)} \big] > 0 \, .
\end{equation} 

$2)$  In particular, one has the equivalence: 
\begin{equation} 
\gamma_{1}^{}(C_\varphi) = 1 \quad \Longleftrightarrow \quad \Vert \varphi\Vert_\infty = 1 \, .
\end{equation} 
\end{theoreme}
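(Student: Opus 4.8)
The plan is to transfer the asymptotics of the approximation numbers provided by Theorem~\ref{jfa} to the entropy numbers via the two-sided estimate of Theorem~\ref{betis}, and then to carry out an elementary optimization in the variable $k$. Set $c = 1/\capa(\Omega)$, so that Theorem~\ref{jfa} reads $a_n(C_\varphi)^{1/n} \to \e^{-c}$, i.e. $\log a_n = -cn + o(n)$. Summing these relations and dividing by $k$ gives
\[ \frac{1}{k} \log \prod_{j=1}^k a_j = -\frac{ck}{2} + o(k) \, , \]
so that the bracketed quantity controlling $e_n(C_\varphi)$ in Theorem~\ref{betis} has logarithm
\[ \log \bigg[ \e^{-n/k} \bigg( \prod_{j=1}^k a_j \bigg)^{1/k} \bigg] = -\frac{n}{k} - \frac{ck}{2} + o(k) \, . \]

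Assume first $\capa(\Omega) < \infty$, so that $c > 0$. The main term $-n/k - ck/2$ is maximized over $k > 0$ at $k_\ast = \sqrt{2n/c}$, where it equals $-\sqrt{2nc}$. First I would make this precise by an $\eps$–$\delta$ argument. For the lower bound, evaluating at $k = \lfloor k_\ast \rfloor$, which is of order $\sqrt{n}$, shows the supremum is at least $-\sqrt{2nc} - o(\sqrt{n})$, since the $o(k)$ remainder is then $o(\sqrt{n})$. For the reverse bound, fix $\delta > 0$; writing $r_k = o(k)$ for the remainder, we have $r_k \leq \delta k$ for all $k$ beyond some $K$, whence
\[ \sup_{k > K} \Big[ -\frac{n}{k} - \frac{ck}{2} + r_k \Big] \leq \sup_{k > 0} \Big[ -\frac{n}{k} - \Big( \frac{c}{2} - \delta \Big) k \Big] = -2\sqrt{n \big( c/2 - \delta \big)} \, , \]
while the finitely many terms $k \leq K$ contribute at most $-n/K + O(1)$, which is negligible after division by $\sqrt{n}$. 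Letting $\delta \to 0$ yields
\[ \sup_{k \geq 1} \log \bigg[ \e^{-n/k} \bigg( \prod_{j=1}^k a_j \bigg)^{1/k} \bigg] = -\sqrt{2nc} + o(\sqrt{n}) \, . \]

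Since the constants $\alpha, \beta$ of Theorem~\ref{betis} shift this only by $O(1)$, absorbed into $o(\sqrt{n})$, we obtain $\log e_n(C_\varphi) = -\sqrt{2nc} + o(\sqrt{n})$; dividing by $\sqrt{n}$ and exponentiating gives
\[ e_n(C_\varphi)^{1/\sqrt{n}} \longrightarrow \e^{-\sqrt{2c}} = \exp\big[ -\sqrt{2/\capa(\Omega)} \, \big] \, . \]
In particular the genuine limit exists, so $\gamma_1^-(C_\varphi) = \gamma_1^+(C_\varphi)$ equals this common value, which proves item~$1)$ when $\capa(\Omega) < \infty$. The degenerate case $\capa(\Omega) = \infty$ (i.e. $c = 0$) is handled by the same estimates under the convention $\sqrt{2/\capa(\Omega)} = 0$: the supremum of $-n/k + o(k)$ is then $o(\sqrt{n})$ (take $k \sim \sqrt{n}/\delta$ for the lower bound, and use that the partial sums of $\log a_j$ are bounded above for the upper bound), so that $\gamma_1(C_\varphi) = 1$. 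Item~$2)$ then follows at once: $\gamma_1(C_\varphi) = 1$ iff $\capa(\Omega) = \infty$, which by item~$2)$ of Theorem~\ref{jfa} is exactly the condition $\|\varphi\|_\infty = 1$.

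The step I expect to be most delicate is the uniform control of the error inside the supremum: one must verify that the $o(k)$ remainder inherited from Theorem~\ref{jfa} disturbs neither the location nor the value of the optimum beyond the $o(\sqrt{n})$ scale. This is precisely where it is essential that the optimizing index $k_\ast$ grows only like $\sqrt{n}$, so that an $o(k)$ error evaluated at $k \sim \sqrt{n}$ is genuinely $o(\sqrt{n})$ and disappears in the limit defining $\gamma_1$.
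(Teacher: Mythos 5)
Your proposal is correct and follows essentially the same route as the paper: transfer the asymptotics $a_n^{1/n} \to \e^{-1/\capa(\Omega)}$ of Theorem~\ref{jfa} through the two-sided estimate of Theorem~\ref{betis} and optimize the exponent $-n/k - \rho k/2$ at $k \approx \sqrt{2n/\rho}$, the paper writing the error control as $a_k \leq C_\eps \, \e^{\eps k}\e^{-\rho k}$ where you use $o(\cdot)$ notation. You are in fact somewhat more careful than the paper, which asserts the supremum is ``essentially attained'' at that $k$ and treats the lower bound and the case $\capa(\Omega) = \infty$ only implicitly, whereas you verify the uniform control of the supremum over all $k$ and handle the degenerate case explicitly.
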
 
\begin{proof} 
Set $\rho = 1/ \capa (\Omega)$ for simplicity of notations. Let $\varepsilon > 0$, and $C_\varepsilon$ a positive constant which depends only on $\varepsilon$ 
and can vary from a formula to another. Theorem~\ref{jfa} implies $a_k \leq C_\varepsilon \, \e^{\varepsilon k} \e^{-k\rho}$, whence:
\begin{displaymath} 
(a_1 \cdots a_k)^{1/k} \leq C_\varepsilon \, \e^{\varepsilon k/2} \e^{- \rho k/2} \, .
\end{displaymath} 
Theorem \ref{betis} now gives:
\begin{displaymath} 
e_{n} (C_\varphi) \leq C_\varepsilon \sup_{k\geq 1} \big[ \e^{\varepsilon k/2} \, \e^{- (n/k + \rho k/2}) \big] \, .
\end{displaymath} 
This supremum is essentially attained for $k = \big[\sqrt{2 n / \rho} \big]$ where $[\, . \, ]$ stands for the integer part, and gives:
\begin{displaymath} 
e_{n} (C_\varphi) \leq  C_\varepsilon  \e^{\varepsilon \sqrt{n / (2 \rho) }} \e^{- \sqrt{ 2 n \rho}} \, .
\end{displaymath} 
This implies $\gamma_{1}^{+} (C_\varphi) \leq \e^{\varepsilon \sqrt{1/(2\rho)}} \e^{- \sqrt{ 2\rho}}$, and finally:
\begin{displaymath} 
\gamma_{1}^{+} (C_\varphi) \leq \e^{- \sqrt{2\rho}} \, .
\end{displaymath} 

The lower bound $\gamma_{1}^{-} (C_\varphi) \geq \e^{- \sqrt{2\rho}}$ is proved similarly. 

This clearly ends the proof, since we know from \cite{LQR} 
that $\capa (\Omega) = \infty$ if ond only if $\Vert \varphi \Vert_\infty = 1$. 
\end{proof}
%

\subsection{Specific results}

For $0 < \theta < 1$, the lens map $\lambda_\theta$ of parameter $\theta$ is defined by:
\begin{equation} 
\lambda_\theta (z) = \frac{(1 + z)^\theta - (1 - z)^\theta}{(1 + z)^\theta + (1 - z)^\theta} 
\end{equation} 
(see \cite{SHA} or \cite{LIQUEROD}). 

\begin{theoreme} \label{lens} 
Let $\lambda_\theta$ be the lens map with parameter $\theta$.  Then, with positive constants $a$, $b$, $a'$, $b'$ depending only on $\theta$:
\begin{equation} 
a' \, \e^{- b' n^{1/3}} \leq e_{n} (C_{\lambda_\theta}) \leq a \, \e^{- b n^{1/3}} \, .
\end{equation} 
\end{theoreme}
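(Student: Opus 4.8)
The plan is to insert the known approximation numbers of $C_{\lambda_\theta}$ into Theorem~\ref{betis}, since on the Hilbert space $H^2(\D)$ the approximation numbers coincide with the singular numbers. Recall from \cite{LIQUEROD} that the lens map has approximation numbers decaying like $\e^{-c\sqrt n}$; concretely there are constants (depending only on $\theta$) with
\begin{equation*}
a''\,\e^{-b''\sqrt n}\le a_n(C_{\lambda_\theta})\le c''\,\e^{-d''\sqrt n}\,,\qquad n\ge 1\,.
\end{equation*}
Everything else is a routine computation of the geometric means $\big(\prod_{j=1}^k a_j\big)^{1/k}$ and of the supremum over $k$ appearing in Theorem~\ref{betis}.

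First I would estimate the geometric means by means of the elementary asymptotics $\sum_{j=1}^k\sqrt j=\frac{2}{3} k^{3/2}+O(\sqrt k)$. The upper bound on the $a_j$ then gives
\begin{equation*}
\bigg(\prod_{j=1}^k a_j\bigg)^{1/k}\le c''\,\exp\!\Big(-\frac{d''}{k}\sum_{j=1}^k\sqrt j\,\Big)\le C\,\e^{-\frac{2d''}{3}\sqrt k}\,,
\end{equation*}
and in the same way the lower bound on the $a_j$ yields a reverse estimate $\big(\prod_{j=1}^k a_j\big)^{1/k}\ge c\,\e^{-\frac{2b''}{3}\sqrt k}$.

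Next comes the optimization. For $\gamma>0$ one has to determine the order in $n$ of $\sup_{k\ge 1}\exp\big(-n/k-\gamma\sqrt k\big)$, that is, the minimum of $g(k)=n/k+\gamma\sqrt k$. Differentiating, $g'(k)=0$ at $k\asymp (n/\gamma)^{2/3}$, and at this point $g(k)=3\cdot 2^{-2/3}\gamma^{2/3}n^{1/3}$; thus the balance between the two competing terms occurs at $k$ of order $n^{2/3}$ and produces the exponent $n^{1/3}$. Feeding the upper estimate into the right-hand inequality of Theorem~\ref{betis} gives $e_n(C_{\lambda_\theta})\le a\,\e^{-b n^{1/3}}$ with $b=3^{1/3}(d'')^{2/3}$; feeding the lower estimate into the left-hand inequality, and taking the single near-optimal value $k=[(n/\gamma)^{2/3}]$ instead of the full supremum, gives $e_n(C_{\lambda_\theta})\ge a'\,\e^{-b' n^{1/3}}$ with $b'$ of order $(b'')^{2/3}$.

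There is no serious obstacle: the genuine content, namely the $\sqrt n$-decay of the approximation numbers, has already been established in \cite{LIQUEROD}, and the rest is calculus. The only point deserving a word of caution is that the upper and lower decay rates $d''$ and $b''$ of the $a_n$ need not agree, so the resulting constants $b$ and $b'$ will in general differ; but since $t\mapsto t^{2/3}$ preserves the power $n^{1/3}$, this affects only the constants and not the exponent $1/3$, which is all the statement asserts.
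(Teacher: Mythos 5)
Your proposal is correct and follows essentially the same route as the paper: known two-sided bounds $a_k(C_{\lambda_\theta})\asymp \e^{-c\sqrt k}$ are fed into Theorem~\ref{betis}, and the supremum $\sup_k \e^{-n/k-\gamma\sqrt k}$ is optimized at $k\asymp n^{2/3}$, yielding the exponent $n^{1/3}$. You are in fact slightly more explicit than the paper (which simply says ``taking $k=[n^{2/3}]$ gives the claimed upper bound''), since you carry out the geometric-mean estimate via $\sum_{j\le k}\sqrt j=\frac23 k^{3/2}+O(\sqrt k)$ and track the constants through the optimization.
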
  
\begin{proof} 
We proved in \cite[Theorem~2.1]{LLQR} (see also \cite[Proposition~6.3]{LIQUEROD} that $a_k = a_k (C_{\lambda_\theta}) \leq a \, \e^{- b\sqrt{k}}$. It 
follows, using Theorem~\ref{betis}, that $(a_1 \cdots a_k)^{1/k}\leq a \, \e^{- b\sqrt{k}}$ and that, for some positive constant $C$:
\begin{displaymath} 
e_{n} (C_{\lambda_\theta}) \leq C \, \exp \big[- \big( (n/ k) + b k^{1/2} \big) \big] \, .
\end{displaymath} 
Taking $k = [n^{2/3}]$ gives the claimed upper bound. The lower bound is proved similarly, using the left inequality in Theorem~\ref{betis}, since we know 
(\cite{LQR}) that $a_k \geq a' \, \e^{- b' \sqrt{k}}$.
\end{proof}

We refer to \cite[Section~4.1]{LIQURO} for the definition of the cusp map $\chi$. We have: 
\begin{theoreme} \label{cusps} 
Let  $\chi$ be the cusp map. Then, with positive constants $a$, $b$, $a'$, $b'$:
\begin{equation} 
a' \, \e^{- b' \sqrt{n/ \log n}} \leq e_{n} (C_\chi) \leq a \, \e^{- b\sqrt{n/ \log n}} \, .
\end{equation} 
\end{theoreme}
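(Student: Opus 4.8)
The plan is to mimic exactly the structure of the lens-map proof (Theorem~\ref{lens}), feeding the known approximation-number asymptotics for the cusp map into Theorem~\ref{betis} and optimizing the resulting supremum. First I would invoke the established bounds on the approximation numbers of $C_\chi$: from our earlier work (\cite{LIQURO}) we know there exist positive constants such that
\begin{displaymath}
a' \, \e^{- b' \, k /\log k} \leq a_k (C_\chi) \leq a \, \e^{- b \, k / \log k} \, .
\end{displaymath}
The exponent here is the crucial input; the passage from the decay rate $\e^{-c\,k/\log k}$ of the $a_k$ to the decay rate $\e^{-c'\sqrt{n/\log n}}$ of the $e_n$ is precisely the kind of ``square-root'' transformation that Theorem~\ref{betis} produces, just as $\e^{-b\sqrt k}$ for the lens map turned into $\e^{-b' n^{1/3}}$.

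For the upper bound, I would first estimate the geometric means. Since $t \mapsto t/\log t$ is increasing, the bound $a_j \leq a \, \e^{- b\, j/\log j}$ gives, after summing the exponents, an estimate of the form $(a_1 \cdots a_k)^{1/k} \leq C \, \e^{- b'' \, k / \log k}$ for a suitable constant $b''>0$ (the averaging only costs a constant factor in the exponent, since $\sum_{j\le k} j/\log j \asymp k^2/\log k$). Plugging this into the right-hand inequality of Theorem~\ref{betis} yields
\begin{displaymath}
e_{n} (C_\chi) \leq \beta \, C \sup_{k \geq 1} \exp\Big[ - \big( n/k + b'' \, k /\log k \big) \Big] \, .
\end{displaymath}
Then I would optimize the exponent $n/k + b'' k/\log k$ over $k$. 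Ignoring the slowly varying $\log k$ factor, the balance $n/k \asymp k/\log k$ is achieved near $k \asymp \sqrt{n \log n}$, at which point both terms are of order $\sqrt{n/\log n}$, giving the claimed upper bound $a\,\e^{-b\sqrt{n/\log n}}$.

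For the lower bound I would proceed symmetrically, using the left inequality in Theorem~\ref{betis} together with the lower estimate $a_k \geq a' \, \e^{- b' k/\log k}$, which gives $(a_1\cdots a_k)^{1/k} \geq c\, \e^{-b'''\,k/\log k}$ and hence, taking the single term $k = [\sqrt{n\log n}]$ in the supremum, a lower bound of the same order. The main obstacle is the careful handling of the slowly varying logarithmic factor in the optimization: unlike the clean power-law case of the lens map (where $k=[n^{2/3}]$ is exact), here the optimal $k \asymp \sqrt{n\log n}$ must be chosen so that the $\log k \asymp \log n$ substitution is legitimate, and one must verify that replacing $\log k$ by $\log n$ at the critical scale only perturbs the exponent by a constant factor. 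This is a routine but slightly delicate estimate; once it is in place, matching the upper and lower exponents up to constants gives the stated two-sided bound.
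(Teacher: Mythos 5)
Your proposal is correct and follows essentially the same route as the paper: the paper likewise quotes the two-sided bounds $a' \, \e^{-b' k/\log k} \leq a_k(C_\chi) \leq a \, \e^{-b k/\log k}$ from \cite{LIQURO} and then runs the argument of Theorem~\ref{lens} through Theorem~\ref{betis} with the same choice $k = [\sqrt{n \log n}]$. The only difference is that the paper leaves the geometric-mean estimate and the $\log k \asymp \log n$ verification implicit, whereas you carry them out explicitly; they are correct as you state them.
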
 
\begin{proof} 
We proved in \cite{LIQURO} that: 
\begin{equation} 
a' \, \e^{- b' k/ \log k}\leq a_k (C_\chi) \leq  a \, \e^{- b k/ \log k} \, .
\end{equation} 
The proof then follows the same lines as in Theorem~\ref{lens}, with the choice $k = [\sqrt{n\log n}]$.
\end{proof}
%
\section{The multidimensional case}

\subsection{General results}

Let $\varphi \colon \D^N\to \D^N$ be an analytic map. We will say that  $\varphi$ is non-degenerate if $\varphi (\D^N)$ has non-empty interior, equivalently 
if $\det \varphi \, ' (z) \neq 0$ for at least one point $z \in \D^N$. 
 
Let now $\varphi \colon \D^N\to \D^N$ be a non-degenerate analytic map inducing a bounded composition operator 
$C_\varphi \colon H^{2} (\D^N) \to H^{2}(\D^N)$ (this is not always the case as soon as $N > 1$, even if $\varphi$ is injective and hence non-degenerate, 
see for example \cite[p.~246]{COMA}, when the polydisk is replaced by the ball; but similar examples exist for the polydisk). Assume moreover that  $C_\phi$ is 
a compact operator.
\begin{theoreme} \label{venerdi} 
Let $C_\varphi \colon H^{2}(\D^N) \to H^{2}(\D^N)$ be a compact composition operator, with $\phi$ non-degenerate. We have:
\smallskip

$1)$ $e_{n}(C_\varphi)\geq c \, \exp \big(- C \, n^{\frac{1}{N+1}} \big)$, for some constants $C > c > 0$, depending on $\varphi$;
\smallskip

$2)$ if $\Vert \varphi\Vert_\infty < 1$, then  $e_{n}(C_\varphi)\leq C \, \exp \big(- c \, n^{\frac{1}{N+1}} \big)$, with $C > c > 0$ depending on $\varphi$.
\end{theoreme}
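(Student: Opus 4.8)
The plan is to argue exactly as in the one-dimensional Theorems~\ref{betise}--\ref{cusps}: feed known bounds on the approximation numbers $a_n(C_\varphi)$ into the two-sided estimate of Theorem~\ref{betis} and optimise over the auxiliary index $k$. The only genuine input is the polydisk counterpart of the exponential decay found in dimension one. For a compact composition operator on $H^2(\D^N)$ with non-degenerate symbol one has, with constants depending on $\varphi$, a universal lower bound
\begin{equation}
a_n(C_\varphi) \geq c \, \e^{- C n^{1/N}} \qquad (n \geq 1) ,
\end{equation}
and, under the extra hypothesis $\Vert \varphi\Vert_\infty < 1$, a matching upper bound
\begin{equation}
a_n(C_\varphi) \leq C \, \e^{- c n^{1/N}} \qquad (n \geq 1) .
\end{equation}
These are precisely the estimates established in \cite{DHL, LQR-pluricap}, which I would quote, thereby reducing the theorem to an elementary computation; note that for $N = 1$ they recover the decay $a_n \asymp \e^{-\rho n}$ underlying Theorem~\ref{betise}.

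For item $2)$ I would substitute $a_j \leq C \, \e^{- c j^{1/N}}$ into the right-hand inequality of Theorem~\ref{betis}. Since $t \mapsto t^{1/N}$ is increasing,
\begin{displaymath}
\frac{1}{k} \sum_{j = 1}^k j^{1/N} \geq \frac{1}{k} \int_0^k t^{1/N}\, dt = \frac{N}{N+1}\, k^{1/N} ,
\end{displaymath}
whence $\big( \prod_{j=1}^k a_j \big)^{1/k} \leq C \, \e^{- c' k^{1/N}}$ with $c' = cN/(N+1)$, and therefore
\begin{displaymath}
e_n(C_\varphi) \leq C \sup_{k \geq 1} \exp\!\big[ - \big( n/k + c' k^{1/N} \big) \big] .
\end{displaymath}
The function $g(k) = n/k + c' k^{1/N}$ is minimised for $k \asymp n^{N/(N+1)}$, where both terms are comparable to $n^{1/(N+1)}$; this gives $e_n(C_\varphi) \leq C \, \e^{- c \, n^{1/(N+1)}}$, as claimed. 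Item $1)$ is obtained symmetrically: inserting the lower bound $a_j \geq c \, \e^{- C j^{1/N}}$ into the left-hand inequality of Theorem~\ref{betis}, bounding $\frac{1}{k}\sum_{j=1}^k j^{1/N}$ from above by $C' k^{1/N}$, and retaining in the supremum only the single value $k \asymp n^{N/(N+1)}$ yields $e_n(C_\varphi) \geq c \, \e^{- C n^{1/(N+1)}}$.

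The optimisation over $k$ is routine: it merely repeats, with the exponent $1/N$ in place of $1$, the computation already performed in Theorem~\ref{betise}, the replacement of $n^{1/N}$ by $n^{1/(N+1)}$ being the arithmetic signature of averaging $j^{1/N}$ over $j \leq k$. The real content, and the main obstacle, is not this transfer step but the approximation-number estimates themselves. The upper bound (used only in item $2)$, hence the hypothesis $\Vert\varphi\Vert_\infty<1$) comes from approximating $C_\varphi f$ by truncated Taylor expansions, exploiting that $\varphi$ maps into a strictly smaller polydisk. The universal lower bound needed in item $1)$ rests on the non-degeneracy of $\varphi$ together with the multidimensional pluripotential results---the pluricomplex Green capacity of $\varphi(\D^N)$ and the Nivoche--Zakharyuta theorem (\cite{NIV, ZAK})---recalled in the introduction. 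Granting these estimates from \cite{DHL, LQR-pluricap}, the passage to entropy numbers is purely mechanical.
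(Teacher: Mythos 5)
Your proposal is correct and is essentially the paper's own proof: the paper likewise quotes the two approximation-number bounds $a_k(C_\varphi) \geq a' \e^{-b' k^{1/N}}$ (for non-degenerate $\varphi$) and $a_k(C_\varphi) \leq C \e^{-c k^{1/N}}$ (when $\Vert\varphi\Vert_\infty < 1$) from prior work and feeds them into Theorem~\ref{betis} with the choice $k = [n^{N/(N+1)}]$, exactly as you do. The only discrepancies are peripheral: the paper cites \cite[Theorems~3.1 and~5.2]{BLQR} rather than \cite{DHL, LQR-pluricap} for these estimates, and your closing remark attributing the lower bound to the Nivoche--Zakharyuta results is imprecise (in this paper that input enters only in the sharp-constant upper bound of Theorem~\ref{citation}), but neither point affects the validity of your argument.
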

\begin{proof} 
$1)$ It is proved in \cite[Theorem~3.1]{BLQR} that, for a non-degenerate map $\varphi$, it holds:
\begin{displaymath} 
a_k (C_\phi) \geq a' \, \e^{- b' k^{1/N}} \, .
\end{displaymath} 
As in the previous section, it follows from Theorem~\ref{betis}, that $(a_1 \cdots a_k)^{1/k} \geq \e^{ - b'' k^{1/N}}$, and then, taking $k = [n^{N/(N+1)}]$, 
that:
\begin{displaymath} 
e_n (C_\phi) \geq c \, \e^{- C n^{1/(N+1)}} \, .
\end{displaymath} 

$2)$ Similarly, for $\| \phi \|_\infty < 1$, it is proved in \cite[Theorem~5.2]{BLQR} that:
\begin{displaymath} 
a_k (C_\phi) \leq C\, \e^{- c k^{1/N}} \, ;
\end{displaymath} 
and we get the result from Theorem~\ref{betis}.
\end{proof}
\smallskip

Those estimates motivate the introduction of the parameter: 
\begin{equation}\label{motiv} 
\gamma_{N} (C_\varphi) = \lim_{n\to \infty} \big[e_{n} (C_\varphi) \big]^{\frac{1}{n^{1/(N+1)}}} \, .
\end{equation}

We define similarly $\gamma_{N}^{\pm} (C_\varphi)$, and will say more on it in next section.

\subsection{Specific results}

\subsubsection{Multi-lens maps}

Let $\lambda_{\theta}$ be lens maps with parameter $\theta $. We define the multi-lens map $\Lambda_\theta$ of parameter $\theta$ on the polydisk $\D^N$ 
as:
\begin{equation} \label{multi-lens} 
\Lambda_\theta (z_1, \ldots, z_N) = \big( \lambda_{\theta} (z_1), \lambda_{\theta}(z_2), \ldots, \lambda_{\theta} (z_N) \big) \, , 
\end{equation} 
for $(z_1, \ldots, z_N) \in \D^N$. 

The following result is proved in \cite[Theorem~6.1]{BLQR}. 
\begin{theoreme} \label{lentille} 
Let $\Lambda_\theta$ be the multi-lens map with parameter $\theta$. Then, for positive constants $a$, $b$, $a'$, $b'$ depending only on $\theta$ and $N$, 
one has:
\begin{equation} \label{bi} 
a' \, \e^ {- b' n^{1/(2 N)}} \leq a_{n} (C_{\Lambda_\theta}) \leq a \, \e^ {- b \, n^{1/(2 N)}} \, .
\end{equation}
\end{theoreme}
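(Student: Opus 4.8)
The plan is to exploit the product structure of the multi-lens map and reduce everything to the one-dimensional lens estimates. Under the canonical isometry $H^2(\D^N) \cong H^2(\D)^{\otimes N}$ sending $f_1 \otimes \cdots \otimes f_N$ to the function $(z_1,\dots,z_N) \mapsto f_1(z_1)\cdots f_N(z_N)$, the coordinatewise definition \eqref{multi-lens} gives exactly $C_{\Lambda_\theta} = C_{\lambda_\theta}^{\otimes N}$, since $\Lambda_\theta$ acts separately in each variable. Because approximation numbers coincide with singular numbers in the Hilbert setting (as recalled in Section~2), and the singular numbers of a tensor power are the decreasing rearrangement of the products of the factors' singular numbers, the full sequence $\big(a_n(C_{\Lambda_\theta})\big)$ is the decreasing rearrangement of the family $\{ s_{i_1}\cdots s_{i_N} : i_1,\dots,i_N \geq 1\}$, where I write $s_k := a_k(C_{\lambda_\theta})$.

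Next I would feed in the one-dimensional estimates used in Theorem~\ref{lens}, namely $a' \, \e^{-b'\sqrt k} \leq s_k \leq a \, \e^{-b\sqrt k}$. For the upper bound in \eqref{bi} I would control the counting function of the product family: the number of tuples $(i_1,\dots,i_N)$ with $s_{i_1}\cdots s_{i_N} > v$ is at most the number with $\sum_k \sqrt{i_k} < S$, where $S$ is an affine function of $-\log v$. A volume comparison shows this count is $\Theta(S^{2N})$, and inverting the relation $n \asymp S^{2N}$ gives $S \asymp n^{1/(2N)}$, hence $a_n(C_{\Lambda_\theta}) \leq a \, \e^{-b\, n^{1/(2N)}}$ after adjusting constants. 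The lower bound is obtained symmetrically: using $s_k \geq a' \, \e^{-b'\sqrt k}$, at least $\Theta(S^{2N})$ of the products exceed the threshold $\e^{-b'S}$ (up to constants), which forces $a_n(C_{\Lambda_\theta}) \geq a' \, \e^{-b'\, n^{1/(2N)}}$.

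The crux of the argument, and the step I expect to demand the most care, is precisely the passage from the one-dimensional order $\e^{-c\sqrt k}$ to the order of the rearranged $N$-fold products, that is, the claim that $\#\{(i_1,\dots,i_N) \in (\N^*)^N : \sum_k \sqrt{i_k} \leq S\}$ is comparable to $S^{2N}$. The clean route is the change of variables $y_k = \sqrt{x_k}$, under which the region $\{\sum_k \sqrt{x_k} \leq S\}$ in $\R_+^N$ becomes the simplex $\{\sum_k y_k \leq S\}$ with Jacobian $\prod_k 2 y_k$; the resulting integral $2^N \int_{\sum_k y_k \leq S} y_1 \cdots y_N \, dy$ is a Dirichlet integral that scales exactly like $S^{2N}$. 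The lattice-point count is then sandwiched between the volumes of slightly dilated copies of this region, using the monotonicity of $\sqrt{\cdot}$; this is enough, since only the exponent $1/(2N)$, and not the precise constants, is at stake in \eqref{bi}.
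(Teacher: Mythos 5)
Your proposal is correct, and it is essentially the argument behind the statement: the paper itself offers no proof here, quoting the result from \cite[Theorem~6.1]{BLQR}, and that source proceeds just as you do --- identifying $C_{\Lambda_\theta}$ with $C_{\lambda_\theta}^{\otimes N}$ under $H^2(\D^N) \cong H^2(\D)^{\otimes N}$, using that the singular values of a Hilbertian tensor product are the non-increasing rearrangement of the products of the factors' singular values, feeding in the one-dimensional bounds $a' \e^{-b'\sqrt{k}} \leq a_k(C_{\lambda_\theta}) \leq a \e^{-b\sqrt{k}}$, and counting lattice points in $\{\sum_k \sqrt{i_k} \leq S\}$, which is indeed $\Theta(S^{2N})$ by your Dirichlet-integral computation $2^N \int_{\sum_k y_k \leq S} y_1 \cdots y_N \, dy = 2^N S^{2N}/(2N)!$. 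Every step checks out, including the inversion $n \asymp S^{2N}$ giving the exponent $1/(2N)$ in both directions (and, as a consistency check, the same tensorization scheme applied to the cusp estimates reproduces the multi-cusp rate quoted in the paper).
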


The version of Theorem~\ref{lentille} for entropy numbers, stated without proof, is:
\begin{theoreme} \label{lenti} 
Let $\Lambda_\theta$ be the multi-lens map with parameter $\theta$. Then:
\begin{equation} \label{bis} 
a' \, \exp \, (- b' n^{1/(2 N + 1)}) \leq e_{n} (C_{\Lambda_\theta}) \leq a \, \exp \, (- b \, n^{1/(2 N + 1)}) \, .
\end{equation}
\end{theoreme}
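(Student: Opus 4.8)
The plan is to deduce Theorem~\ref{lenti} from the approximation-number estimates of Theorem~\ref{lentille} by plugging them into the two-sided bound of Theorem~\ref{betis}, exactly following the pattern of the proofs of Theorem~\ref{lens} and Theorem~\ref{venerdi}. The heart of the matter is a single elementary optimization: given that $a_k(C_{\Lambda_\theta})$ behaves like $\e^{-c\,k^{1/(2N)}}$, I need to evaluate the geometric-mean quantity $\big(\prod_{j=1}^k a_j\big)^{1/k}$ and then maximize $\e^{-n/k}\big(\prod_{j=1}^k a_j\big)^{1/k}$ over $k$.

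\medskip

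First I would treat the upper bound. From the right inequality in \eqref{bi}, $a_j \leq a\,\e^{-b\,j^{1/(2N)}}$, so
\begin{displaymath}
\bigg(\prod_{j=1}^k a_j\bigg)^{1/k} \leq a\,\exp\Big(-\frac{b}{k}\sum_{j=1}^k j^{1/(2N)}\Big)\leq a\,\exp\big(-b''\,k^{1/(2N)}\big),
\end{displaymath}
using $\frac1k\sum_{j=1}^k j^{\alpha}\sim \frac{1}{\alpha+1}k^{\alpha}$ for $\alpha=1/(2N)>0$. Theorem~\ref{betis} then gives
\begin{displaymath}
e_n(C_{\Lambda_\theta}) \leq \beta\,a\,\sup_{k\geq1}\exp\big(-n/k - b''\,k^{1/(2N)}\big).
\end{displaymath}
The function $k\mapsto n/k + b''\,k^{1/(2N)}$ is minimized, up to constants, when the two terms balance, i.e. $n/k \asymp k^{1/(2N)}$, which forces $k\asymp n^{2N/(2N+1)}$; substituting this back makes each term of order $n^{1/(2N+1)}$, yielding the claimed upper bound $e_n(C_{\Lambda_\theta})\leq a\,\exp(-b\,n^{1/(2N+1)})$ after renaming constants.

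\medskip

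For the lower bound I would argue symmetrically. The left inequality in \eqref{bi}, $a_j\geq a'\,\e^{-b'\,j^{1/(2N)}}$, gives $\big(\prod_{j=1}^k a_j\big)^{1/k}\geq a'\,\exp(-b'''\,k^{1/(2N)})$ by the same averaging estimate, and since the supremum in Theorem~\ref{betis} dominates any single term, I may simply choose the near-optimal index $k=[n^{2N/(2N+1)}]$ to obtain a lower bound of the form $e_n(C_{\Lambda_\theta})\geq \alpha\,a'\,\exp(-b'\,n^{1/(2N+1)})$. I do not even need to verify that this $k$ is the exact maximizer, only that it is a legitimate competitor, so the left-hand inequality of Theorem~\ref{betis} suffices here.

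\medskip

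I expect no genuine obstacle: the only point requiring a little care is the asymptotics of the partial sums $\sum_{j=1}^k j^{1/(2N)}$, which I would justify by comparison with the integral $\int_0^k t^{1/(2N)}\,dt = \frac{2N}{2N+1}k^{(2N+1)/(2N)}$, dividing by $k$ to recover the exponent $1/(2N)$. The balancing step that produces the exponent $1/(2N+1)$ from the input exponent $1/(2N)$ is the conceptual content, and it is exactly the mechanism by which Theorem~\ref{venerdi} converted an $a_k$-exponent $1/N$ into an $e_n$-exponent $1/(N+1)$; here the input exponent $1/(2N)$ plays the role of $1/N$, so the output is $1/(2N+1)$, in agreement with \eqref{bis}. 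All constants depend only on $\theta$ and $N$, as required.
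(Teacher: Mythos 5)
Your proposal is correct, and it is exactly the argument the paper intends: Theorem~\ref{lenti} is stated there without proof, but your derivation --- feeding the two-sided bounds \eqref{bi} of Theorem~\ref{lentille} into Theorem~\ref{betis}, estimating $\frac1k\sum_{j=1}^k j^{1/(2N)}\asymp k^{1/(2N)}$, and balancing $n/k$ against $k^{1/(2N)}$ at $k=[n^{2N/(2N+1)}]$ --- reproduces verbatim the template the authors use for Theorems~\ref{lens}, \ref{cusps} and \ref{venerdi}. In particular your observation that for the lower bound one only needs to test the supremum in the left inequality of Theorem~\ref{betis} at a single near-optimal $k$ is precisely how the paper handles the lower bounds in those proofs.
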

%

\subsubsection{Multi-cusp maps}

Let $\chi \colon \D \to \D$ be the cusp map and $\phi \colon \D^N \to \D^N$ be the multi-cusp map defined by:
\begin{equation} 
\Xi \, (z_1, \ldots, z_N) = \big( \chi (z_1), \chi(z_2), \ldots, \chi (z_N) \big) \, . 
\end{equation} 
It is proved in \cite[Theorem~6.2]{BLQR}:
\begin{theoreme} \label{theo cusp}
Let $\chi \colon \D \to \D$ be the cusp map and $\Xi \colon \D^N \to \D^N$ be the multi-cusp map. Then:
\begin{equation} 
a' \, \e^{- b' \, n^{1/N} / \log n} \leq a_n (C_\Xi) \leq a \, \e^{- b \, n^{1/N} / \log n} \, ,
\end{equation} 
where $a$, $b$, $a'$, $b'$ are positive constants depending only on $N$.
\end{theoreme}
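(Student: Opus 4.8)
The plan is to exploit the product structure of the multi-cusp map and reduce everything to a one-dimensional input plus a lattice-point count. Since the Hardy space of the polydisk factors as a Hilbert tensor product $H^2(\D^N) = H^2(\D) \otimes \cdots \otimes H^2(\D)$ ($N$ factors), with the monomial basis $z^\alpha$ corresponding to $z_1^{\alpha_1}\otimes\cdots\otimes z_N^{\alpha_N}$, and since $\Xi$ acts coordinate-wise, one has on elementary tensors $C_\Xi(f_1\otimes\cdots\otimes f_N)=(C_\chi f_1)\otimes\cdots\otimes(C_\chi f_N)$; hence $C_\Xi=C_\chi^{\otimes N}$. Taking the Schmidt decomposition of $C_\chi$ and forming tensor products of its singular vectors, the singular numbers of $C_\Xi$ are exactly the products $\sigma_{k_1}\cdots\sigma_{k_N}$, $(k_1,\ldots,k_N)\in\N^N$, rearranged in non-increasing order, where $\sigma_k=s_k(C_\chi)=a_k(C_\chi)$ (recall $a_k=s_k$ for Hilbert space operators). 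In particular $a_n(C_\Xi)$ is the $n$-th largest of these products, so the only analytic input needed is the one-dimensional two-sided estimate $a'\,\e^{-b'k/\log k}\le\sigma_k\le a\,\e^{-bk/\log k}$ recalled in the proof of Theorem~\ref{cusps} (from \cite{LIQURO}).

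For the combinatorial part I would set $M(s)=\#\{(k_1,\ldots,k_N)\in\N^N : \sum_{i=1}^N k_i/\log k_i\le s\}$ and compare it with the one-variable count $m(u)=\#\{k\ge 2 : k/\log k\le u\}$. Because $k\mapsto k/\log k$ is eventually increasing with inverse of order $u\log u$, one has $m(u)\asymp u\log u$. An admissible tuple has each coordinate satisfying $k_i/\log k_i\le s$, whence $k_i\le m(s)$ and $M(s)\le m(s)^N\asymp(s\log s)^N$; conversely, keeping all $k_i\le m(s/N)$ forces $\sum_i k_i/\log k_i\le s$, giving $M(s)\ge m(s/N)^N\asymp(s\log s)^N$. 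Thus $M(s)\asymp(s\log s)^N$.

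Finally I would feed the one-dimensional bounds into the products: the number of multi-indices with $\prod_i\sigma_{k_i}\ge\tau$ is squeezed between $M\big(c_1\log(1/\tau)\big)$ and $M\big(c_2\log(1/\tau)\big)$, the lower estimate on $\sigma_k$ producing the former and the upper estimate the latter. Writing $s\asymp\log(1/\tau)$ and inverting the bookkeeping relation ``rank $\approx M(s)$, value $\approx\e^{-s}$'' amounts to solving $(s\log s)^N\asymp n$, i.e. $s\log s\asymp n^{1/N}$, hence $s\asymp n^{1/N}/\log n$. This gives $a_n(C_\Xi)\asymp\e^{-c\,n^{1/N}/\log n}$, the two sides of the stated inequality coming respectively from the lower and upper one-dimensional estimates, with all constants depending only on $N$.

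The conceptual content lies entirely in the tensorization of the first paragraph; the main technical nuisance, and the step to be careful with, is the passage from the counting function $M$ to the ordered sequence of products (the routine ``$a_{M(s)}\ge\e^{-s}\ge a_{M(s)+1}$''-type inequalities), together with the non-monotonicity of $k/\log k$ for small $k$ and the division by $\log k$ at $k=1$, which I would dispose of by replacing $\log k$ with $\max(\log k,1)$ and treating finitely many indices separately. None of this affects the exponents, so the asymptotics $n^{1/N}/\log n$ are robust.
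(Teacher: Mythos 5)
Your proposal is correct and matches the intended argument: the paper itself gives no proof of this statement, quoting it from \cite[Theorem~6.2]{BLQR}, and the proof there proceeds along exactly your route --- identifying $C_\Xi$ with $C_\chi^{\otimes N}$ under $H^2(\D^N) = H^2(\D)^{\otimes N}$, noting that the singular numbers of the tensor power are the decreasing rearrangement of the products $\sigma_{k_1}\cdots\sigma_{k_N}$, feeding in the one-dimensional estimate $a_k(C_\chi) \asymp \e^{-c\,k/\log k}$ from \cite{LIQURO}, and counting multi-indices with $\sum_i k_i/\log k_i \le s$. Your counting asymptotics $M(s)\asymp_N (s\log s)^N$, the rank-versus-value inversion yielding $s \asymp n^{1/N}/\log n$, and the fixes for the degeneracy of $\log k$ at small $k$ are all sound, so nothing further is needed.
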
 

The version of Theorem~\ref{theo cusp} for entropy numbers, stated without proof, is:
\begin{theoreme} \label{theo cus}
let $\chi \colon \D \to \D$ be the cusp map and $\Xi \colon \D^N \to \D^N$ be the multi-cusp map. Then:
\begin{equation} 
\begin{split}
a' \, \exp \big[- b' & \, n^{1 /(N+1)}\,  (\log n)^{- N/(N+1)}\big]  \\
& \qquad \leq e_n (C_\Xi) \leq a \, \exp\big[- b \, n^{1 /(N+1)}\,  (\log n)^{- N/(N+1)}\big] \, . 
\end{split}
\end{equation} 
\end{theoreme}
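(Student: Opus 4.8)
The plan is to follow exactly the pattern of the proofs of Theorems~\ref{lens}, \ref{cusps}, \ref{venerdi} and \ref{lenti}: feed the two-sided approximation-number estimate of Theorem~\ref{theo cusp} into the two-sided estimate of Theorem~\ref{betis}, form the geometric means, and then optimize the resulting supremum over $k$. The only genuinely new feature is the logarithmic factor in the exponent, which forces a slightly more delicate analysis both in forming the geometric means and in locating the optimal $k$.

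First I would estimate the geometric means. Writing $f(k) = k^{1/N}/\log k$, Theorem~\ref{theo cusp} gives $a_j \leq a \, \e^{- b f(j)}$ for $j \geq 2$ (and the matching lower bound), so that
\[
(a_1 \cdots a_k)^{1/k} \leq a \, \exp\Big(- \frac{b}{k}\sum_{j = 2}^k \frac{j^{1/N}}{\log j}\Big) .
\]
Comparing the sum with $\int_2^k t^{1/N}/\log t \, dt$, whose leading asymptotics $\frac{N}{N+1}\, k^{(N+1)/N}/\log k$ one checks by differentiating $t^{(N+1)/N}/\log t$, one finds
\[
(a_1 \cdots a_k)^{1/k} \leq a \, \exp\big(- c\, k^{1/N}/\log k\big)
\]
for a new constant $c > 0$. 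The crucial point is that the geometric mean has the \emph{same} form $\e^{- c f(k)}$ as the individual terms, just with a smaller constant; the lower bound is obtained identically from the left inequality in Theorem~\ref{theo cusp}.

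Next I would insert this into Theorem~\ref{betis}, obtaining
\[
e_n(C_\Xi) \leq \beta \,\sup_{k \geq 1}\exp\Big(- \frac{n}{k} - c\,\frac{k^{1/N}}{\log k}\Big) ,
\]
so that the problem reduces to minimizing $g(k) = n/k + c\, k^{1/N}/\log k$. Balancing the two terms, $n/k \asymp k^{1/N}/\log k$, leads to $k^{(N+1)/N} \asymp n \log k$; since $\log k \sim \frac{N}{N+1}\log n$ at the optimum, this self-referential relation is resolved by the choice $k = [\, n^{N/(N+1)} (\log n)^{N/(N+1)}\,]$. Substituting this value, both terms of $g(k)$ turn out to be of the order $n^{1/(N+1)}(\log n)^{- N/(N+1)}$, which yields the claimed upper bound. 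The lower bound follows in the same way from the left-hand inequalities in Theorems~\ref{betis} and \ref{theo cusp}.

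The main obstacle will be the bookkeeping around the logarithm. One must verify that replacing $\log k$ by $\frac{N}{N+1}\log n$ at the near-optimal $k$ introduces only lower-order errors in $g(k)$, and that the supremum is genuinely essentially attained near the proposed $k$ rather than being disturbed by small values of $k$, where the asymptotics used for the geometric mean are not valid (indeed $\log 1 = 0$). This is routine but requires keeping the slowly varying factor under control throughout the balancing argument.
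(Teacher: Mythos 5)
Your proposal is correct and is essentially the argument the paper intends: the paper states Theorem~\ref{theo cus} explicitly \emph{without proof}, and the proofs it does give for the analogous results (Theorems~\ref{lens} and \ref{cusps}) follow exactly your scheme --- geometric means of the $a_k$-bounds from Theorem~\ref{theo cusp} fed into Theorem~\ref{betis} and optimized over $k$ --- with your choice $k = [\, n^{N/(N+1)}(\log n)^{N/(N+1)}]$ reducing to the paper's $k = [\sqrt{n \log n}]$ in the case $N = 1$ of Theorem~\ref{cusps}. Your treatment of the logarithmic factor is sound: the integral comparison correctly gives $(a_1 \cdots a_k)^{1/k} \leq a \, \e^{- c\, k^{1/N}/\log k}$ with $c \approx \frac{N}{N+1}\, b$, the self-consistent resolution $\log k \sim \frac{N}{N+1}\log n$ is valid, and at your $k$ both terms $n/k$ and $c\, k^{1/N}/\log k$ balance at the claimed order $n^{1/(N+1)}(\log n)^{-N/(N+1)}$, while for the upper bound the minimum of $g(k) = n/k + c\, k^{1/N}/\log k$ over \emph{all} $k$ is of this order since small $k$ are dominated by the $n/k$ term, exactly the point you flag.
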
 
%

  \section{Connections with pluricapacity and Zakharyuta's results}

Here, in dimension $N \geq 2$, the situation is satisfactory for upper bounds (see \cite{DHL}); for lower bounds, see \cite{LQR-pluricap}. 
The notion involved is now that of pluricapacity, or Monge-Amp\`ere capacity, coined by Bedford and Taylor in \cite{BETA}. More precisely, 
if $A$ is a Borel subset of $\D^N$, we refer to \cite{DHL} or \cite{LQR-pluricap} for the definition of its pluricapacity $\capaN(A)$, belonging to 
$[0,+\infty]$, and set:
\begin{align}
\tau_{N}(A) & = \frac{1}{(2\pi)^N} \, \capaN (A) \label{tau} \\ 
\Gamma_{N}(A) & = \exp \bigg[- \Big(\frac{N!}{\tau_{N}(A)}\Big)^{1/N} \bigg] \label{gamma}  \\
\beta_{N}^{+}(T) & = \limsup_{n \to \infty}\big[a_{n}(T)\big]^{1/ n^{1/N}} \, . \label{tabe} 
\end{align}

We temporarily assume that $\Vert \varphi\Vert_\infty < 1$ so that $K = \overline{\varphi (\D^N})$ is a compact subset of $\D^N$.  We proved in 
\cite[Theorem~6.4]{DHL}, relying on positive results of Nivoche (\cite{NIV}) and Zaharyuta (\cite[Proposition~6.1]{ZAK}) on the so-called Kolmogorov 
conjecture, that:
\begin{theoreme}\label{citation} 
It holds:
\begin{equation} 
\beta_{N}^{+} (C_\varphi) \leq \Gamma_{N}(K) \, .
\end{equation} 
\end{theoreme}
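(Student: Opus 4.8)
The plan is to relate the approximation numbers of $C_\varphi$ on $H^2(\D^N)$ to an approximation-theoretic quantity on the compact set $K = \overline{\varphi(\D^N)}$, and then to invoke the deep identification of that quantity with the pluricapacity $\capaN(K)$ supplied by Nivoche and Zakharyuta. Concretely, I would first recall that, because $\|\varphi\|_\infty < 1$, the composition operator factors through the restriction map $H^2(\D^N) \to \mathcal{O}(K)$ followed by evaluation, so that bounding $a_n(C_\varphi)$ amounts to measuring how well functions in the image can be approximated by finite-dimensional spaces. The natural gauge here is the sequence of Kolmogorov widths (or the associated $\eps$-entropy/Chebyshev-type constants) of the class of functions holomorphic and bounded on a neighborhood of $K$, restricted to $K$.

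The central step is the Kolmogorov conjecture, resolved by Nivoche \cite{NIV} and Zakharyuta \cite{ZAK}: for a regular compact set $K$ contained in a bounded pseudoconvex domain, the asymptotics of these widths are governed by the pluripotential-theoretic (Monge-Amp\`ere) capacity of $K$, with the precise exponential rate expressed through the extremal plurisubharmonic function and its Monge-Amp\`ere mass. I would quote \cite[Proposition~6.1]{ZAK} to extract, for every $\eps>0$, an inequality of the shape
\begin{displaymath}
a_n(C_\varphi) \leq C_\eps \, \exp\Big[ - \big( 1 - \eps \big) \Big( \tfrac{N!}{\tau_N(K)} \Big)^{1/N} n^{1/N} \Big],
\end{displaymath}
valid for all large $n$, where $\tau_N(K)$ is as in \eqref{tau}. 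Taking the $1/n^{1/N}$-power and letting $n\to\infty$ yields
\begin{displaymath}
\big[ a_n(C_\varphi) \big]^{1/n^{1/N}} \leq \exp\big[ -(1-\eps)\,(N!/\tau_N(K))^{1/N}\big] \cdot C_\eps^{1/n^{1/N}},
\end{displaymath}
and since $C_\eps^{1/n^{1/N}} \to 1$, the limsup is at most $\exp[-(1-\eps)(N!/\tau_N(K))^{1/N}]$. Letting $\eps \downarrow 0$ gives $\beta_N^+(C_\varphi) \leq \Gamma_N(K)$ by the definitions \eqref{gamma}--\eqref{tabe}.

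The step I expect to be the genuine obstacle is the passage from the intrinsic approximation numbers of the \emph{operator} $C_\varphi$ on $H^2(\D^N)$ to the \emph{width} quantity on $K$ to which the Nivoche--Zakharyuta machinery applies. One must control the restriction $H^2(\D^N)\to \mathcal{O}(K)$ in the correct functional-analytic topology (the Bergman- or sup-type norms used in the pluripotential literature) and check that finite-rank approximants of near-optimal width, constructed on the function class over $K$, lift back to finite-rank operators approximating $C_\varphi$ with comparable error — absorbing the geometry of how $\varphi(\D^N)$ sits inside $\D^N$ into the $\eps$-slack and the constant $C_\eps$. This dictionary, together with the verification that $K$ is regular enough (e.g.\ $L$-regular) for the Kolmogorov-conjecture results to apply, is where the real work lies; the extraction of the rate and the final limit computation are then routine, which is why this inequality is invoked rather than reproved here, with full details deferred to \cite[Theorem~6.4]{DHL}.
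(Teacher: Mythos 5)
Your proposal matches the paper's treatment: Theorem~\ref{citation} is not proved in this paper at all, but imported verbatim from \cite[Theorem~6.4]{DHL}, resting on the Nivoche--Zakharyuta resolution of the Kolmogorov conjecture (\cite{NIV}, \cite[Proposition~6.1]{ZAK}) --- exactly the chain of references you invoke, and your extraction of the rate $a_n(C_\varphi) \leq C_\eps \exp\big[-(1-\eps)(N!/\tau_N(K))^{1/N} n^{1/N}\big]$ followed by the $\limsup$ computation is the correct way the bound $\beta_N^+(C_\varphi) \leq \Gamma_N(K)$ falls out of the definitions \eqref{tau}--\eqref{tabe}. You also correctly locate the genuine work (the dictionary between $a_n(C_\varphi)$ and the widths of the restriction operator over $K$, cf.\ the paper's closing remark on $d_n(J)$ and $e_n(J)$) in the cited reference rather than claiming to reprove it, so nothing further is needed.
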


We have the following result, which extends the previous result in dimension $1$. \goodbreak
\begin{theoreme}\label{nose}
The following upper bound holds:
\begin{equation} 
\gamma_{N}^{+} (C_\varphi)\leq \exp\big(-\beta_N\rho^{N / (N + 1)}\big) \, ,
\end{equation} 
where:
\begin{equation} \label{nese}  
\quad \rho = \bigg( \frac{N!}{\tau_N (K)} \bigg)^{1/N} = 2 \pi \bigg( \frac{N!}{\capaN (K)}\bigg)^{1/N} \, , 
\end{equation}
and
\begin{equation} \label{ugli}
\begin{split} 
\beta_N = \bigg(\frac{N}{N + 1} \bigg)^{N / (N + 1)} \big( N^{- N / (N + 1)} + & N^{1 / (N + 1)} \big) \\
& \geq \e^{- 1 / (N + 1)} N^{1 / (N + 1)}  \, .
\end{split}
\end{equation} 
\end{theoreme}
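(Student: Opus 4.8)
The plan is to transfer the approximation-number estimate of Theorem~\ref{citation} to entropy numbers through the upper bound in Theorem~\ref{betis}, following the one-dimensional scheme of Theorem~\ref{betise} but with the decay rate $k^{1/N}$ replacing $k$. First I would recast Theorem~\ref{citation} in pointwise form. Since $\Gamma_N (K) = \e^{-\rho}$ by \eqref{gamma} and \eqref{nese}, the bound $\beta_N^+ (C_\varphi) \leq \Gamma_N (K)$ together with the definition \eqref{tabe} means that for every $\eps > 0$ there is a constant $C_\eps > 0$ such that
\begin{equation*}
a_k (C_\varphi) \leq C_\eps \, \e^{-(\rho - \eps)\, k^{1/N}} \, , \qquad k \geq 1 \, .
\end{equation*}
(The $\limsup$ controls all but finitely many terms, and these last are absorbed into $C_\eps$.)

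Next I would form the geometric means occurring in Theorem~\ref{betis}. Using the Riemann-sum estimate $\sum_{j=1}^k j^{1/N} = \frac{N}{N+1}\, k^{(N+1)/N}\,(1 + o(1))$, I get
\begin{equation*}
\Big( \prod_{j=1}^k a_j \Big)^{1/k} \leq C_\eps \, \exp\Big( -(\rho - \eps)\, \tfrac{N}{N+1}\, k^{1/N} \Big) \, ,
\end{equation*}
and, inserting this into the right-hand inequality of Theorem~\ref{betis}, with the shorthand $\rho' = (\rho - \eps)\frac{N}{N+1}$,
\begin{equation*}
e_n (C_\varphi) \leq \beta \, C_\eps \, \sup_{k \geq 1}\, \exp\Big( -\tfrac{n}{k} - \rho'\, k^{1/N} \Big) \, ,
\end{equation*}
where $\beta$ is the constant of Theorem~\ref{betis}.

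The heart of the matter is the optimization of this supremum, which is precisely what produces the constant $\beta_N$. Differentiating $n/k + \rho' k^{1/N}$ shows that the supremum is essentially attained at $k \asymp (Nn/\rho')^{N/(N+1)}$; substituting this value makes both terms proportional to $n^{1/(N+1)}$ and turns the exponent into $-\,(\rho')^{N/(N+1)} \big( N^{-N/(N+1)} + N^{1/(N+1)} \big)\, n^{1/(N+1)}$. Taking the $n^{1/(N+1)}$-th root, passing to the upper limit in \eqref{motiv}, and letting $\eps \to 0$ (so that $C_\eps$ disappears and $\rho'$ tends to $\rho\frac{N}{N+1}$) then yields exactly $\gamma_N^+ (C_\varphi) \leq \exp\big(-\beta_N \rho^{N/(N+1)}\big)$ with $\beta_N$ as in \eqref{ugli}. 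The displayed lower bound on $\beta_N$ follows by discarding the positive term $N^{-N/(N+1)}$ and using $(1 + 1/N)^N \leq \e$ to get $\big(\frac{N}{N+1}\big)^{N/(N+1)} \geq \e^{-1/(N+1)}$; one may in fact simplify $\beta_N = (N+1)^{1/(N+1)}$, which for $N = 1$ recovers the constant $\sqrt{2}$ of Theorem~\ref{betise}. I expect the only delicate point to be the optimization step: one must check that rounding the critical $k$ to an integer costs only a factor absorbable into $C_\eps$, and then recognize that the two-term constant emerging from the substitution is exactly the $\beta_N$ of \eqref{ugli}.
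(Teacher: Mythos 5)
Your proposal is correct and takes essentially the same route as the paper's proof: the pointwise recasting of Theorem~\ref{citation} as $a_k \leq C_\eps\, \e^{-(\rho-\eps)k^{1/N}}$, the geometric-mean estimate with exponent $\frac{N}{N+1}k^{1/N}$, insertion into the upper bound of Theorem~\ref{betis}, and optimization at $k \asymp (Nn/\rho')^{N/(N+1)}$, your only (cosmetic) deviation being that you fold $\eps$ into $\rho$ rather than carrying a separate factor $\e^{\eps k^{1/N}}$ as the paper does. Your additional observation that $\beta_N$ simplifies to $(N+1)^{1/(N+1)}$ is correct (since $N^{-N/(N+1)}+N^{1/(N+1)}=(N+1)N^{-N/(N+1)}$), is not recorded in the paper, and neatly recovers the constant $\sqrt{2}$ of Theorem~\ref{betise} for $N=1$.
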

\begin{proof} 
Abbreviate $a_{n} (C_\varphi)$ and $e_{n} (C_\varphi)$ to $a_{n}$ and $e_{n}$, and set $\alpha = N / (N + 1)$. Let $\varepsilon > 0$. 
Theorem~\ref{citation} implies:
\begin{displaymath} 
a_{k}\leq C_{\varepsilon} \, \e^{\varepsilon k^{1/N}} \e^{-\rho k^{1/N}} \, ,
\end{displaymath} 
so:
\begin{displaymath} 
(a_1 \cdots a_{k})^{1/k} \leq C_{\varepsilon} \, \e^{\varepsilon k^{1/N}} \e^{- \rho \alpha k^{1/N}} \, .
\end{displaymath} 
Apply once more Theorem~\ref{betis} to obtain:
\begin{displaymath} 
e_{n} \leq C_{\varepsilon} \, \sup_{k\geq 1} \e^{\varepsilon k^{1/N}} \exp \big[- (n/k + \rho \alpha k^{1/N}) \big] \, .
\end{displaymath} 
The supremum is essentially attained for $k$ the integral part of $(N / \rho \alpha)^{\alpha} n^{\alpha}$ and then, in view of \eqref{ugli} and 
$\alpha / N = 1 - \alpha$, up to a negligible term: 
\begin{align*}
\frac{n}{k} + \rho \, \alpha \, k^{1/N}
& = n^{1 - \alpha}\bigg( \frac{\rho \, \alpha}{N} \bigg)^{\alpha} + \rho \, \alpha \, n^{1 - \alpha} \bigg( \frac{N}{\rho\alpha}\bigg)^{1 - \alpha} \\
& = n^{1 - \alpha} (\rho \, \alpha)^{\alpha} (N^{- \alpha} + N^{1 - \alpha}) \, .
\end{align*}
Finally, 
\begin{displaymath} 
e_{n} \leq C_{\varepsilon} \, \e^{\varepsilon n^{1 - \alpha}} \exp \, (- \beta_{N} \rho^{\alpha} n^{1 - \alpha}) 
= C_{\varepsilon} \, \e^{\varepsilon n^{1/(N+1)}} \exp \, (- \beta_{N} \rho^{\alpha} n^{1/(N+1)}) \, .
\end{displaymath} 

This clearly ends the proof of Theorem~\ref{nose}.
\end{proof}
\medskip

\noindent {\bf Remark.}
We have so far no sharp lower bound for entropy numbers, at least when $\Vert \varphi\Vert_\infty = 1$, since we already fail to have one in general for 
approximation numbers (see however \cite{LQR-pluricap}). 

Besides, let $J \colon H^{\infty} (\D^N) \to \mathcal{C}(K)$ be the canonical embedding, when $K\subseteq \D^N$ is a ``condenser'',  namely a compact 
subset of $\D^N$ such that any bounded analytic function on $\D^N$ which vanishes on $K$ vanishes identically, which is moreover ``regular''. The positive 
solution to the Kolmogorov conjecture can be expressed in terms of the Kolmogorov numbers $d_{n} (J)$ of $J$ or equivalently, in terms of the entropy numbers 
$e_{n} (J)$ of $J$ (\cite[Theorem~5]{ZAK-1}, generalizing Erokhin's result in dimension $1$ appearing in his posthumous paper \cite{Erokhin} and methods 
due to Mityagin \cite{Mityagin} and Levin and Tikhomirov \cite{LT}; see also \cite[Lemma~2.2]{ZAK}). The result is that, taking 
$K=\overline{\varphi (\D^N})$, one has, with sharp constants $c_K$, $c'_K$ depending on the pluricapacity of $K$ in $\D^N$:
\begin{equation} 
d_{n} (J) \approx \e^{- c_K \, n^{1/N}} \quad \text{and} \quad e_{n} (J) \approx \e^{- c'_K\, n^{1/(N+1)}} \, .
\end{equation} 
This jump from the exponent $1/N$ to the exponent $1/(N+1)$ is reflected in our Theorem~\ref{nose}, through the new parameter $\gamma_{N}^{+}$.


\smallskip

{\footnotesize
Daniel Li \\ 
Univ. Artois, Laboratoire de Math\'ematiques de Lens (LML) EA~2462, \& F\'ed\'eration CNRS Nord-Pas-de-Calais FR~2956, 
Facult\'e Jean Perrin, Rue Jean Souvraz, S.P.\kern 1mm 18 
F-62\kern 1mm 300 LENS, FRANCE \\
daniel.li@euler.univ-artois.fr
\smallskip

Herv\'e Queff\'elec \\
Univ. Lille Nord de France, USTL,  
Laboratoire Paul Painlev\'e U.M.R. CNRS 8524 \& F\'ed\'eration CNRS Nord-Pas-de-Calais FR~2956 
F-59\kern 1mm 655 VILLENEUVE D'ASCQ Cedex, FRANCE \\
Herve.Queffelec@univ-lille.fr
\smallskip
 
Luis Rodr{\'\i}guez-Piazza \\
Universidad de Sevilla, Facultad de Matem\'aticas, Departamento de An\'alisis Matem\'atico \& IMUS,  
Calle Tarfia s/n \\ 
41\kern 1mm 012 SEVILLA, SPAIN \\
piazza@us.es
}

\end{document}